\title{Fractional vertex-arboricity of planar graphs}
\author{
  Marthe Bonamy%
  \thanks{Univ.\ Bordeaux, CNRS,  Bordeaux INP, LaBRI, UMR 5800, F-33400, Talence, France.
    Email: \href{mailto:marthe.bonamy@u-bordeaux.fr}{\nolinkurl{marthe.bonamy@u-bordeaux.fr}}}
  \and
  Franti\v{s}ek Kardo\v{s}%
  \thanks{Univ.\ Bordeaux, CNRS,  Bordeaux INP, LaBRI, UMR 5800, F-33400, Talence, France.
    Email: \href{mailto:fkardos@labri.fr}{\nolinkurl{fkardos@labri.fr}}}
  \and
  Tom Kelly%
  \thanks{School of Mathematics, University of Birmingham.
    Email: \href{mailto:T.J.Kelly@bham.ac.uk}{\nolinkurl{T.J.Kelly@bham.ac.uk}}. Partially supported by the EPSRC, grant no. EP/N019504/1.}
  \and
  Luke Postle%
  \thanks{Department of Combinatorics and Optimization, University of Waterloo. 
    Email: \href{mailto:lpostle@uwaterloo.ca}{\nolinkurl{lpostle@uwaterloo.ca}}
    Partially supported by NSERC under Discovery Grant No.\ 2019-04304, the Ontario Early Researcher Awards program and the Canada Research Chairs program.}
}
\begin{document}
\maketitle

\begin{abstract}
  We initiate a systematic study of the fractional vertex-arboricity of planar graphs and demonstrate connections to open problems concerning both fractional coloring and the size of the largest induced forest in planar graphs.  In particular, the following three long-standing conjectures concern the size of a largest induced forest in a planar graph, and we conjecture that each of these can be generalized to the setting of fractional vertex-arboricity.  In 1979, Albertson and Berman conjectured that every planar graph has an induced forest on at least half of its vertices, in 1987, Akiyama and Watanabe conjectured that every bipartite planar graph has an induced forest on at least five-eighths of its vertices, and in 2010, Kowalik, Lu\v{z}ar, and \v{S}krekovski conjectured that every planar graph of girth at least five has an induced forest on at least seven-tenths of its vertices.  We make progress toward the fractional generalization of the latter of these, by proving that every planar graph of girth at least five has fractional vertex-arboricity at most $2 - 1/324$.  
\end{abstract}

\section{Introduction}

For $k \in \mathbb N$, a \textit{$k$-arborization} of a graph $G$ is a map $\phi : V(G) \rightarrow [k]$ such that for each $i \in [k]$, the set of vertices $\{v \in V(G) : \phi(v) = i\}$ induces a forest in $G$, and the \textit{vertex-arboricity} of $G$, denoted $\va(G)$, is the smallest $k$ such that $G$ has a $k$-arborization.  The concept of a $k$-arborization is analagous to that of a proper $k$-coloring, where color classes induce forests rather than independent sets.  In this paper, we investigate the following natural ``fractional'' variant of vertex-arboricity (first studied in~\cite{YZ05}), which has comparatively received much less attention than the fractional chromatic number.

A \textit{fractional arborization} of a graph $G$ is a map $\phi$ with domain $V(G)$ such that for each $v\in V(G)$, the image $\phi(v)$ of $v$ is a measurable subset of $\mathbb R$ of Lebesgue measure one such that for each $\alpha \in \mathbb R$, the set of vertices $\{v \in V(G) : \alpha \in \phi(v)\}$ induces a forest in $G$.  For $k\in\mathbb R$, a \textit{fractional $k$-arborization} of $G$ is a fractional arborization such that each vertex $v\in V(G)$ satisfies $\phi(v) \subseteq (0, k)$, and the \textit{fractional vertex-arboricity} of $G$, denoted $\fva(G)$, is the infimum over all positive real numbers $k$ such that $G$ has a fractional $k$-arborization.

The fractional vertex-arboricity is also related to another well-studied graph invariant, the size of a largest subset of vertices of a graph $G$ that induces a forest, which we denote $a(G)$.  By standard arguments from the study of the fractional chromatic number, the fractional vertex-arboricity of every graph $G$ satisfies the following pair of inequalities:
\begin{equation}
  \label{inequality-chain}
  |V(G)| / a(G) \leq \fva(G) \leq \va(G).
\end{equation}

The analogue of~\eqref{inequality-chain} for the fractional chromatic number is the fact that every graph $G$ satisfies $|V(G)| / \alpha(G) \leq \chi_f(G) \leq \chi(G)$, where $\alpha(G)$, $\chi_f(G)$, and $\chi(G)$ are the independence number, the fractional chromatic number, and the chromatic number of $G$, respectively.  Research on the fractional chromatic number often fits at least one of the following two themes.  On the one hand, better upper bounds for $\chi_f$ have been proved when the same bound is impossible or out of reach for $\chi$ -- in fact, in the first paper on fractional coloring,  before the Four Color Theorem was proved~\cite{AH76}, Hilton, Rado, and Scott~\cite{HRS73} proved that planar graphs have fractional chromatic number strictly less than five.  On the other hand, considerable attention has been given to generalizing bounds on the independence number to the fractional chromatic number.  We can see these two themes, even just for planar graphs, in~\cite{CR18, DH18, DH20, DM17, DSV15, HT06, PU02}.  

In this paper we extend this paradigm from fractional coloring to fractional vertex-arboricity in the context of planar graphs.  Perhaps most importantly, though, as we discuss in Section~\ref{methods-subsection}, investigating the fractional vertex-arboricity offers a new strategy to possibly improve the best known lower bound on the size of the largest induced forest in a planar graph, a longstanding open problem.  Also, it is straightforward to show that every graph $G$ satisfies $\chi_f(G) \leq 2\cdot \fva(G)$, so substantial enough progress in this area could provide generalized forms of some bounds on the fractional chromatic number.

\subsection{Planar graphs}

The vertex-arboricity of planar graphs is fairly well understood.  It is straightforward to show that planar graphs have vertex-arboricity at most three, and there exist planar graphs with vertex-arboricity three.  Hakimi and Schmeichel~\cite{HS89} proved a topological characterization of planar graphs with vertex-arboricity two (generalizing a result of Stein~\cite{S71}) and showed that it is NP-complete to determine if a planar graph has vertex-arboricity at most two.  Raspaud and Wang~\cite{RW08} proved that for $k\in\{3, 4, 5, 6\}$, a planar graph without $k$-cycles has vertex-arboricity at most two.

On the other hand, for the size of the largest induced forest in planar graphs, the following three interesting conjectures have remained open for decades.
\begin{enumerate}[(1)]
\item In 1979, Albertson and Berman~\cite{AB79} conjectured that every planar graph has an induced forest on at least half of its vertices.
\item In 1987, Akiyama and Watanabe~\cite{AW87} conjectured that every bipartite planar graph has an induced forest on at least five-eighths of its vertices.
\item In 2010, Kowalik, Lu\v{z}ar, and \v{S}krekovski~\cite{KLS10} conjectured that every planar graph of girth at least five has an induced forest on at least seven-tenths of its vertices.
\end{enumerate}
That is, these conjectures state that if $G$ is planar, then $a(G) \geq |V(G)| / 2$, and moreover $a(G) \geq 5|V(G)| / 8$ if $G$ is bipartite and $a(G) \geq 7|V(G)| / 10$ if $G$ has girth at least five.  If true, these conjectures would be tight for $K_4$, the cube, and the dodecahedron, respectively.  The best kown result toward the Albertson-Berman Conjecture is Borodin's~\cite{B79} Acyclic 5-Color Theorem, which implies that $a(G) \geq 2|V(G)| / 5$ for every planar graph $G$.  The best known result toward the Akiyama-Watanabe Conjecture is due to Wang, Xie, and Yu~\cite{WXY18}, with a bound of $a(G) \geq (4|V(G)| + 3)/7$ if $G$ is bipartite and planar.  Akiyama and Watanabe's conjecture may hold more generally for triangle-free planar graphs -- the best known result toward this stronger form is a bound of $a(G) \geq 5|V(G)| / 9$ if $G$ is triangle-free and planar, proved by Le~\cite{Le18}.  For the Kowalik-Lu\v{z}ar-\v{S}krekovski Conjecture, the best known result is a bound of $a(G) \geq 2n/3$ if $G$ is planar and has girth at least five, proved independently by Kelly and Liu~\cite{KL17} and Shi and Xu~\cite{SX17}.

We conjecture that all three of these conjectures can be generalized (via the first inequality in~\eqref{inequality-chain}) to the fractional setting, as follows.  The first such conjecture generalizes the Albertson-Berman Conjecture.

\begin{conjecture}\label{planar-fva-conj}
  Every planar graph has fractional vertex-arboricity at most two.
\end{conjecture}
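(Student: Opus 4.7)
The plan is to reduce the problem via LP duality to the task of constructing, for every planar $G$, a probability measure on the induced forests of $G$ such that each vertex $v$ lies in the random forest with probability at least $1/2$. Concretely, I would exhibit induced forests $F_1,\dots,F_m$ and nonnegative weights $w_1,\dots,w_m$ with $\sum_i w_i = 2$ and $\sum_{i : v \in F_i} w_i \ge 1$ for every $v\in V(G)$.

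The natural starting point is Borodin's Acyclic $5$-Color Theorem: an acyclic $5$-coloring $V(G)=C_1 \sqcup \cdots \sqcup C_5$ gives $\binom{5}{2}=10$ induced forests $C_i \cup C_j$. Weighting each pair by $1/4$, the weights sum to $5/2$ while every vertex lies in exactly four of the pairs, hence is covered by total weight $1$. This yields the free bound $\fva(G) \le 5/2$, and the task reduces to closing a gap of $1/2$. An acyclic $4$-coloring would already give $\fva(G)\le 2$ by the analogous pairing, but some planar graphs have acyclic chromatic number exactly $5$, so this naive route is closed.

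To push to $2$, I would try to ``fractionalize'' the acyclic-coloring argument. One variant is to seek a \emph{fractional acyclic $4$-coloring}: a convex combination of acyclic colorings whose monochromatic pairwise unions, weighted appropriately, form a fractional $2$-arborization. Another is to combine Borodin's decomposition with a guaranteed large induced forest $F \subseteq V(G)$ with $|F| \ge \alpha|V(G)|$, placing weight $w$ on $F$ and distributing the remaining $2-w$ over the $10$ Borodin pairs, tuning $w$ against $\alpha$ so that the coverage constraint holds at every vertex. A third variant is a minimal-counterexample/discharging scheme in the style of the paper's girth-$5$ result: find a reducible configuration, delete it, apply induction to obtain a fractional $2$-arborization of the smaller graph, and extend by carefully inserting the deleted vertices into the supporting forests. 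Extension requires that each added vertex $v$ has at most one neighbor in each forest it joins, and that, if it has one, this insertion does not complete a cycle.

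The main obstacle is precisely that Conjecture~\ref{planar-fva-conj} implies the Albertson--Berman Conjecture via $|V(G)|/a(G) \le \fva(G) \le 2$, and Albertson--Berman has been open since 1979 with the best bound still $2|V(G)|/5$ from Borodin's acyclic theorem. Any complete proof must therefore produce, as a by-product, a planar induced forest on at least half the vertices, which is strictly harder than what the acyclic-coloring machinery currently delivers. I expect the sharpest difficulty to lie in the extension step of any inductive scheme: a fractional arborization of $G-v$ does not naturally record information about which supporting forests leave room for $v$, and forcing enough of them to remain extensible without sacrificing coverage of the rest of the graph appears to require a genuinely new structural idea about planar graphs.
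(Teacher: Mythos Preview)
The statement you are addressing is Conjecture~\ref{planar-fva-conj}, and the paper does not prove it: it is posed as an open problem generalizing the Albertson--Berman Conjecture, and the only bound the paper records for general planar graphs is $\fva(G)\le 5/2$, obtained exactly as you describe from Borodin's Acyclic $5$-Color Theorem. So there is no ``paper's own proof'' to compare against.

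Your proposal is not a proof either, and to your credit you say so. The LP-duality reformulation and the $5/2$ bound from the ten bichromatic forests are both correct and match the paper's discussion. Your three suggested routes toward closing the gap to $2$ are reasonable heuristics, but none of them currently goes through, and you correctly locate the central obstruction: any proof of Conjecture~\ref{planar-fva-conj} would, via $|V(G)|/a(G)\le\fva(G)$, immediately yield Albertson--Berman, which has resisted improvement beyond Borodin's $2|V(G)|/5$ for over forty years. In particular, the ``large induced forest plus Borodin pairs'' blending idea cannot reach $2$ without already having $a(G)\ge |V(G)|/2$ in hand, and the inductive/discharging scheme faces exactly the extension difficulty the paper highlights in its methods discussion: when recoloring a deleted vertex $v$ with two external neighbors, those neighbors may lie in the same component of every supporting forest, and the paper's workaround for the girth-five case (recoloring a larger patch via fractional offshoot-assignments) has no known analogue strong enough for general planar graphs. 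Your write-up is a fair summary of the landscape, but it should be labeled as such rather than as a proof proposal.
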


Although the best possible upper bound on the vertex-arboricity of planar graphs in general is three, Borodin's~\cite{B79} Acyclic 5-Color Theorem actually implies that every planar graph has fractional vertex-arboricity at most $5/2$, as follows.  Any acyclic 5-coloring of a graph (which is a proper 5-coloring with no 2-colored cycle) can be used to construct a fractional $5/2$-arborization -- assign vertices colored the first color the interval $(0, 1)$, assign vertices colored the second color the interval $(1, 2)$, assign vertices colored the third color $(2, 5/2) \cup (0, 1/2)$, assign vertices colored the fourth color the interval $(1/2, 3/2)$, and assign vertices colored the fifth color the interval $(3/2, 5/2)$.  Since every $\alpha \in (0, 5/2)$ is assigned to vertices colored one of at most two colors, this assignment is indeed a fractional arborization.  Any improvement on this upper bound of $5/2$ would be significant, since it would be the first improvement over the bound $a(G) \geq 2|V(G)|/5$ for planar graphs in over forty years.  Also, if true, Conjecture~\ref{planar-fva-conj} implies that every planar graph has fractional chromatic number at most four.  Although this result follows from the Four Color Theorem, no other proof is known (see~\cite{CR18}).

Our second conjecture generalizes the Akiyama-Watanabe Conjecture, not only to the fractional setting but also by replacing the hypothesis that the graph is bipartite with the weaker assumption that it is triangle-free.

\begin{conjecture}\label{tri-free-fva-conj}
  Every triangle-free planar graph has fractional vertex-arboricity at most $8/5$.
\end{conjecture}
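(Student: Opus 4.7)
The plan is to attempt Conjecture~\ref{tri-free-fva-conj} via the discharging-plus-reducibility framework that is standard for tight planar graph problems of this flavour. Let $G$ be a minimum counterexample, that is, a triangle-free planar graph with $\fva(G) > 8/5$ minimizing $|V(G)| + |E(G)|$. By LP duality, $\fva(G)$ equals the maximum of $\sum_v x_v$ over all $x : V(G) \to \mathbb R_{\geq 0}$ satisfying $\sum_{v \in F} x_v \leq 1$ for every induced forest $F$, and a fractional $8/5$-arborization can equivalently be described as a probability distribution over induced forests covering each vertex with total weight at least $5/8$. Either formulation will be used to verify reducibility.

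The first technical step is to build a library of reducible configurations. For each candidate configuration $H$, one shows that if $G - V(H)$ admits a fractional $8/5$-arborization $\phi'$, then $\phi'$ extends to $G$, contradicting minimality. Natural candidates are vertices of small degree, short paths of degree-three vertices, and short faces bordered by many low-degree vertices. The extension step is more delicate than for proper coloring: one must choose measurable subsets of $(0, 8/5)$ of Lebesgue measure one for each $v \in V(H)$ so that, at every level $\alpha$, the vertices containing $\alpha$ in their image still induce a forest. A useful baseline is the Raspaud--Wang bound $\va(G) \leq 2$ for triangle-free planar graphs, giving a $2$-arborization $V(G) = V_1 \cup V_2$; the task then becomes to produce additional induced forests whose contribution (with appropriate weights) trims the total measure from $2$ down to $8/5$.

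The second step is a discharging argument using Euler's formula for girth at least four: assign initial charge $d(v) - 4$ to each vertex $v$ and $\ell(f) - 4$ to each face $f$, giving total charge $-8$. Discharging rules are then designed so that the absence of every reducible configuration forces all final charges to be non-negative, yielding the desired contradiction.

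The main obstacle is the tightness of $8/5$, matched exactly by the cube: the reducible configurations must consequently be extremely restrictive, and the discharging analysis correspondingly intricate, which is precisely why the analogous problem at girth five required the considerable relaxation to $2 - 1/324$. The fractional extension lemmas also present a genuine new difficulty compared to integer arboricity, since the LP permits a continuum of ways to merge and split interval assignments across $V(H)$ and the induced-forest condition must be verified at every $\alpha \in (0, 8/5)$. A realistic intermediate target is to prove $\fva(G) \leq 2 - \varepsilon$ for some explicit $\varepsilon > 0$ on all triangle-free planar graphs, which would already yield the first unconditional improvement over the integer bound $\va(G) \leq 2$ and should serve as a useful rehearsal for the full conjecture.
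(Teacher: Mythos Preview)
The statement you are addressing is a \emph{conjecture} in the paper, not a theorem; the paper offers no proof of it and explicitly lists it as open. Consequently there is no ``paper's own proof'' to compare against, and what you have written is not a proof either but a research outline. You correctly identify the discharging-plus-reducibility framework the paper uses for its girth-five result (Theorem~\ref{main-thm}), and your remarks about the difficulties (tightness at the cube, the need for fractional extension lemmas, the suggestion of first aiming for $2-\varepsilon$) are all sensible and indeed echo the paper's own discussion in Section~\ref{methods-subsection}. But none of the actual work is done: you name no specific reducible configuration, prove no extension lemma, and give no discharging rules. A proof proposal that says ``build a library of reducible configurations'' and ``design discharging rules'' without exhibiting a single one is not a proof attempt in any meaningful sense.

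If your intent was merely to sketch a plausible line of attack on an open problem, that is fair, and your sketch is reasonable as far as it goes. But you should be clear that Conjecture~\ref{tri-free-fva-conj} is open, that the paper does not claim to prove it, and that your text constitutes a strategy rather than a proof. In particular, your final paragraph already concedes this by retreating to the weaker target $\fva(G)\le 2-\varepsilon$, which the paper also singles out as an interesting first step.
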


Finally, we conjecture that the Kowalik-Lu\v{z}ar-\v{S}krekovski Conjecture can be generalized to the fractional setting as well.

\begin{conjecture}\label{girth-five-fva-conj}
  Every planar graph of girth at least five has fractional vertex-arboricity at most $10/7$.
\end{conjecture}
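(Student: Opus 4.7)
The plan is to attack Conjecture~\ref{girth-five-fva-conj} by the discharging method on a minimum counterexample, pushing the technique that presumably underlies the $2 - 1/324$ bound announced in the abstract to its breaking point. Let $G$ be a planar graph of girth at least five and minimum order such that $\fva(G) > 10/7$. The goal is first to show that $G$ cannot contain any of a list of small ``reducible'' configurations, and then to deduce via Euler's formula and a clever discharging scheme that $G$ must in fact contain one of them, a contradiction.

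For each candidate reducible configuration $H \subseteq G$, the reduction proceeds as follows: remove an interior set $S \subseteq V(H)$, apply minimality to obtain a fractional $10/7$-arborization $\phi$ of $G - S$, and then extend $\phi$ to $S$ by prescribing measurable intervals $\phi(v) \subseteq (0, 10/7)$ of measure one so that for each real $\alpha$, the set of vertices $u$ with $\alpha \in \phi(u)$ still induces a forest in $G$. Typical candidates for $S$ include an isolated $2$-vertex, a pair of adjacent $3$-vertices, a short path of $3$-vertices lying on a common $5$-face, and more elaborate clusters of $3$- and $4$-vertices. Each extension lemma reduces to an interval-packing problem: one must ensure that for each $\alpha$ where $v \in S$ is active, adding $v$ to the current $\alpha$-forest does not create a cycle through $v$'s neighbors in $G - S$, and this slack must consume at most $10/7$ of the real line.

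The discharging step assigns each vertex $v$ initial charge $d(v) - 4$ and each face $f$ initial charge $\ell(f) - 4$, so that by Euler's formula and the girth hypothesis the total charge is negative. One then designs rules that move charge from faces and higher-degree vertices toward the low-degree vertices, so that in the absence of any reducible configuration every vertex and face ends with non-negative charge. Because the conjectured bound $10/7$ is tight on the dodecahedron, the rules must be calibrated so that, on the dodecahedron, the final charges are \emph{exactly} zero --- there is no room to waste.

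The principal obstacle is precisely this tightness. A $3$-vertex in a girth-$5$ planar graph must essentially ``break even'' after discharging, which leaves no slack for absorbing structures that differ from the dodecahedron only locally. Consequently, any reducible configuration must fail to appear in the dodecahedron, ruling out the simplest candidates and forcing configurations of much larger diameter, whose extension lemmas become correspondingly harder to prove because many forbidden-interval constraints must be reconciled simultaneously. This is the same barrier that, I expect, prevents the techniques of this paper from going below $2 - 1/324$. Overcoming it will likely require either a genuinely new family of global reducible configurations (perhaps structural lemmas in the spirit of those used for extensions of Grötzsch's theorem), or a non-local tool, for instance an LP-duality argument that exploits the fractional relationship between induced forests and suitably weighted vertex covers rather than building the arborization one configuration at a time.
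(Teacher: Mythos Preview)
The statement you are attempting is Conjecture~\ref{girth-five-fva-conj}, which the paper poses as an \emph{open problem}; the paper does not prove it. What the paper actually proves is the much weaker Theorem~\ref{main-thm}, that planar graphs of girth at least five have fractional vertex-arboricity at most $2 - 1/324$. There is therefore no proof in the paper against which to compare your attempt.

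Your proposal is not itself a proof either, and you essentially say so. You sketch a discharging-plus-reducibility strategy, correctly identify the dodecahedron as the tight example, and then explicitly concede that the approach breaks down for the reason you name: tightness at $3$-vertices leaves no slack, forcing reducible configurations too large to handle with current extension techniques. That diagnosis is reasonable and is consistent with the paper's own remark that even a bound of $3/2 - \eps$ would already be a significant advance. But a sketch that ends with ``overcoming it will likely require a genuinely new idea'' is a research outline, not a proof. Concretely, no reducible configurations are specified, no extension lemmas are established, and no discharging rules are exhibited that balance the charge; each of these is a genuine gap. In short, you have not proved the conjecture, and neither has the paper.
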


Previously, the best known bound for the fractional vertex-arboricity of triangle-free and girth at least five planar graphs was two, the same as for the vertex-arboricity~\cite{RW08}.  Our main result is to ``break the integer barrier'' for planar graphs of girth at least five and make the first significant progress toward Conjecture~\ref{girth-five-fva-conj}, as follows.

\begin{theorem}\label{main-thm}
  The fractional vertex-arboricity of every planar graph of girth at least five is at most $2 - 1/324$.
\end{theorem}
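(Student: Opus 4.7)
The plan is to apply a minimum-counterexample discharging argument. The starting point is that planar graphs of girth at least five are $3$-degenerate and therefore satisfy $\va(G)\le 2$; taking a partition $V(G)=A\cup B$ into two induced forests and assigning $\phi(A)=(0,1)$, $\phi(B)=(1,2)$ already gives a fractional $2$-arborization, so the task is to save measure $1/324$ beyond this baseline.

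Suppose for contradiction $G$ is a planar graph of girth at least five with $\fva(G)>2-1/324$ and $|V(G)|$ minimum. I would then catalog \emph{reducible configurations}: induced subgraphs $H$ such that any fractional $(2-1/324)$-arborization $\phi$ of $G-V(H)$ extends to one of $G$. A pendant vertex is trivially reducible (any measurable set of measure $1$ in $(0,2-1/324)$ works), so $\delta(G)\ge 2$. For a degree-two vertex $v$ with neighbors $u,w$, the extension requires $\phi(v)$ of measure $1$ in $(0,2-1/324)$ avoiding the bad set $B_v:=\{\alpha\in\phi(u)\cap\phi(w):u,w\text{ lie in the same tree of }\phi^{-1}(\alpha)\}$. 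Inclusion-exclusion inside $(0,2-1/324)$ forces $|\phi(u)\cap\phi(w)|\ge 1/324$, and in the worst case $|B_v|=1$, so a degree-two vertex is not reducible on its own. The nontrivial reducible configurations will therefore be multi-vertex packages -- for instance short paths of degree-two vertices, or degree-two vertices in a prescribed low-degree neighborhood -- whose joint extension can marshal the $1/324$ slack.

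With the configurations fixed, the second step is to show that any planar graph of girth at least five contains at least one of them, via discharging. Assigning each vertex $v$ the charge $d(v)-10/3$, Euler's formula and the girth hypothesis make the total charge negative; given the absence of the cataloged configurations, redistribution should leave every vertex nonnegative, a contradiction. This mirrors the style of argument Kelly--Liu and Shi--Xu used to prove $a(G)\ge 2|V(G)|/3$ for the same class, and I would expect to adapt their discharging to the fractional weighted setting.

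The main obstacle will be the extension step. With only $1/324$ of slack, the analysis must account for every fraction of measure, and one-vertex reductions are insufficient. I anticipate needing to strengthen the inductive hypothesis -- for example, by requiring the fractional arborization produced on smaller graphs to satisfy a uniform bound on $|\phi(u)\cap\phi(w)|$ for pairs $u,w$ at small distance -- so that when extending $\phi$ across a reducible configuration the bad set at each newly added vertex has measure at most $1-1/324$. The specific value $1/324$ should then emerge as the numerical outcome of balancing the measure saved per reducible configuration against the density guaranteed by the discharging rules.
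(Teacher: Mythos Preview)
Your overall framework matches the paper's: minimum counterexample, reducible configurations, discharging with initial charge $d(v)-10/3$, Euler's formula for girth five yielding negative total charge. But the proposal leaves the essential difficulty untouched and suggests a route the paper explicitly avoids.

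The gap is in the reducibility step. You frame reducibility as ``any fractional $(2-\eps)$-arborization $\phi$ of $G-V(H)$ extends to one of $G$,'' and you correctly observe that this fails even for a single degree-two vertex $v$: if its neighbours $x,y$ both receive the same unit-measure set and are joined by a monochromatic path, the blocked set $B$ has measure $1$, exceeding the slack $1-\eps$. Your proposed fix---strengthen the inductive hypothesis to cap $|\phi(u)\cap\phi(w)|$ for nearby pairs---is speculative, and it is unclear how to maintain such a bound through the reductions while still proving the strengthened statement.

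The paper's key idea is different and sidesteps this entirely. For a configuration $H$ containing $v$, one does \emph{not} take an arbitrary arborization of $G-V(H)$. Instead one removes a single vertex $u\in V(H)$ with at most one neighbour outside $H$ (a degree-two neighbour of $v$, say), takes an arborization $\phi$ of $G-u$ by criticality, and then \emph{restricts} $\phi$ to $G-V(H)$. The point is that $v$ was already coloured in $\phi$, so $\phi(v)\cap B=\varnothing$, forcing $|B|\le 1-\eps$ automatically---no strengthened hypothesis needed. One is then ``recolouring'' the vertices of $H$ rather than extending blindly.

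Even with $|B|\le 1-\eps$ in hand, extending $\phi$ over $H$ is delicate: one must also prevent monochromatic paths \emph{through} $H$ joining external neighbours. The paper handles this with a second piece of machinery you have not anticipated: a \emph{fractional offshoot-assignment} $(L,o)$, where $o(w)$ records, for each $w\in V(H)$, the colours already used on its external neighbours, and a proposition reducing the existence of an $(L,o)$-arborization to a family of weighted local colourings (demand functions $f_{X,O}$). The value $1/324$ emerges not from balancing discharging against slack per se, but from the numerics of these demand functions in the configuration where a degree-three vertex has two light neighbours.
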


As is the case for fractional coloring, the first inequality in~\eqref{inequality-chain} holds with equality if $G$ is vertex-transitive.  The aforementioned tight examples for conjectures concerning the size of the largest induced forest are all vertex-transitive, so they all satisfy Conjectures~\ref{planar-fva-conj}--\ref{girth-five-fva-conj}.  Nevertheless, if any one of Conjectures~\ref{planar-fva-conj}--\ref{girth-five-fva-conj} turns out to be false, it would still be interesting to determine the best possible upper bound on the fractional vertex-arboricity of the graphs under consideration.  It would already be interesting to prove that planar graphs have fractional vertex-arboricity at most $5/2 - \eps$ for some $\eps > 0$ and that triangle-free planar graphs have fractional vertex-arboricity at most $2 - \eps$ for some $\eps > 0$.  It would also be interesting to obtain a significant improvement to our upper bound in Theorem~\ref{main-thm}.  There are some places in our argument where a more careful analysis would yield minor improvements to our upper bound -- we opted for the simplest proof of a bound of $2 - \eps$ for any $\eps > 0$ -- but a bound of $3/2 - \eps$ for example would imply the conjecture of Dvo\v{r}\'{a}k and Mnich~\cite{DM17} that for some $\eps > 0$, planar graphs of girth at least five have fractional chromatic number at most $3 - \eps$.  Conjecture~\ref{girth-five-fva-conj}, if true, would thus confirm this conjecture in a strong sense (in addition to confirming the Kowalik-Lu\v{z}ar-\v{S}krekovski Conjecture).  

\subsection{Methods}\label{methods-subsection}

Now we discuss our strategy for proving Theorem~\ref{main-thm} as well as its implications for Conjectures~\ref{planar-fva-conj} and~\ref{tri-free-fva-conj}.  Our proof consists of using discharging (Lemma~\ref{discharging-lemma}) to show that every planar graph of girth at least five has a configuration of vertices that is ``reducible'' (Lemmas~\ref{effective-min-degree-two-lemma} and~\ref{degree-three-adj-to-two-degree-threes-lemma}, pictured in Figures~\ref{degree-two-reducible-config-labelling} and~\ref{reducible-config-labelling}), which in this context essentially means that any fractional $(2 - 1/324)$-arborization of the graph obtained by removing these vertices can be extended to one of the whole graph.

Proofs of related results, such as~\cite{B79, Le18, WXY18}, use a similar strategy, but the notion of ``reducibility'' depends on the context.  It is notable that, compared with Borodin's Acyclic 5-Color Theorem~\cite{B79} and the Albertson-Berman Conjecture, partial progress (as in~\cite{DMP19, KL17, Le18, WXY18}) has been made toward the Akiyama-Watanabe Conjecture and the Kowalik-Lu\v{z}ar-\v{S}krekovski Conjecture by an approach that is more direct in some sense, wherein proving reducibility of a configuration involves extending an induced forest to a larger one, versus extending a vertex-coloring.  
Suppose $G$ is a graph, $X\subseteq V(G)$, and we aim to show that any induced forest of $G - X$ can be extended to one of $X$ in a nontrivial way.  If there is a vertex $v\in X$ with two neighbors not in $X$, and if these two neighbors are in the same component of the induced forest that we want to extend, then we cannot extend it to include $v$.  However, if we are extending an acyclic 5-coloring instead, then we still have a choice of at least three different colors to assign to $v$, and if we are extending a fractional $(5/2 - \eps)$-arborization, then we still have the freedom to choose from a subset of measure at least $(3/2 - \eps)$ to assign to $v$.  In this way, especially in the context of the Albertson-Berman Conjecture and Conjecture~\ref{planar-fva-conj}, proving a stronger statement may be useful for demonstrating reducibility of the configurations.

If we instead aim to extend a fractional $(2 - \eps)$-arborization of $G - X$ to one of $X$, as in Theorem~\ref{main-thm} and possibly in a proof of Conjecture~\ref{tri-free-fva-conj}, then a vertex $v\in X$ with at least two neighbors not in $X$ requires special care, because those two neighbors may be the ends of a path $P$ in which all vertices are assigned the same measurable set in the fractional arborization, in which case any subset of $(0, 2 - \eps)$ of measure one that we assign to $v$ contains a point assigned to all vertices of the path.  A standard trick would be to add an edge between these two neighbors, ensuring this situation does not happen, but then we would need to ensure that the resulting graph still satisfies the girth hypothesis.  In our proof of Theorem~\ref{main-thm} we use a different approach.  We obtain the fractional $(2 - \eps)$-arborization of $G - X$ by first finding a fractional $(2 - \eps)$-arborization of $G - u$ for some $u \in X\setminus\{v\}$ that has at most one neighbor not in $X$, and then we take its restriction to $G - X$.  In effect, we are ``recoloring'' the vertices in $X\setminus\{u\}$ in order to extend the fractional arborization to $u$.


\section{Reducibility}

The main results of this section are Lemmas~\ref{effective-min-degree-two-lemma} and~\ref{degree-three-adj-to-two-degree-threes-lemma}.  Each lemma implies that a certain configuration is reducible, that is, it does not appear in a minimum counterexample to Theorem~\ref{main-thm}.  First, in Section~\ref{prelim-subsection} we develop some machinery which we use to prove these lemmas.  We expect this machinery to also be useful for making further progress toward Conjectures~\ref{planar-fva-conj}--\ref{girth-five-fva-conj}.

\subsection{Preliminaries}\label{prelim-subsection}

In this subsection we prove Proposition~\ref{offshoot-coloring-proposition}, which is an important tool in our proofs of Lemmas~\ref{effective-min-degree-two-lemma} and~\ref{degree-three-adj-to-two-degree-threes-lemma}.  If we have a graph $G$ and a fractional $k$-arborization of an induced subgraph $H$ of $G$, then this proposition provides sufficient conditions for finding a fractional $k$-arborization of $G - V(H)$ that is ``compatible'' with the one of $H$, meaning we can combine the two to obtain a fractional $k$-arborization of $G$.  In order to be able to state Proposition~\ref{offshoot-coloring-proposition}, we first introduce several concepts in the next few definitions.

The first such concept is the following fractional analogue of list coloring in the context of vertex arboricity.  Let $\mu$ be the Lebesgue measure on the real numbers.
\begin{definition}
  Let $G$ be a graph.
  \begin{itemize}
  \item If $L$ is a function with domain $V(G)$ such that $L(v)$ is a measurable subset of $\mathbb R$ for each $v\in V(G)$,
    then $L$ is a \textit{fractional list-assignment} for $G$.
  \item A \textit{fractional $L$-arborization} is a fractional arborization $\phi$ of $G$ such that every vertex $v\in V(G)$ satisfies $\phi(v)\subseteq L(v)$ and $\meas{\phi(v)} \geq 1$.
  \end{itemize}
\end{definition}

In the setting described above, we have a graph $G$ and a fractional $k$-arborization, say $\phi$, of an induced subgraph $H$ of $G$ that we are aiming to extend to a fractional $k$-arborization of $G$.  For every $v \in V(G)\setminus V(H)$, we need to avoid assigning to $v$ any $\alpha \in (0, k)$ for which there is a path $P$ in $H$ where the ends of $P$ are both adjacent to $v$ and $\alpha \in \cap_{u\in V(P)}\phi(u)$.  To that end, if we let $b(v)$ be the union taken over all such paths $P$ of $\cap_{u\in V(P)}\phi(u)$ and let $L(v) = (0, k) \setminus b(v)$ for each $v \in V(G)\setminus V(H)$, then a fractional $L$-arborization of $G - V(H)$ satisfies this requirement.

However, for every path $P'$ in $G - V(H)$, we also need to avoid assigning some $\alpha \in (0, k)$ to all vertices of $P'$ if there is a path $P$ in $H$ where $P\cup P'$ is a cycle and $\alpha \in \cap_{u\in V(P)}\phi(u)$.  The next concept provides us a way to ensure a fractional $L$-arborization of $G - V(H)$ satisfies this second requirement as well.

\begin{definition}\label{offshoot-assignment-definition}
  Let $G$ be a graph.
  \begin{itemize}
  \item A \textit{fractional offshoot-assignment} for $G$ is a pair $(L, o)$ where $L$ and $o$ are fractional list-assignments such that $o(v)\subseteq L(v)$ for every $v\in V(G)$.
  \item If $(L, o)$ is a fractional offshoot-assignment for $G$, a \textit{fractional $(L, o)$-arborization of $G$} is a fractional $L$-arborization $\phi$ of $G$ such that for each path $P$ in $G$, we have $(o(x)\cap o(y))\bigcap(\cap_{v\in V(P)}\phi(v)) = \varnothing$, where $x$ and $y$ are the ends of $P$.
  \end{itemize}
\end{definition}

Now if $G$ is a graph, $\phi$ is a fractional $k$-arborization of an induced subgraph $H$ of $G$, $L$ is a fractional list-assignment for $G - V(H)$ defined as before, and for each $v\in V(G)\setminus V(H)$, we let $o(v) = \cup_{u \in N(v)\cap V(H)}\phi(u)$, then a fractional $(L, o)$-arborization of $G - V(H)$ can be combined with $\phi$ to obtain a fractional $k$-arborization of $G$.  This fact is crucial in applying Proposition~\ref{offshoot-coloring-proposition} -- although we remark that in our application of it in Lemmas~\ref{effective-min-degree-two-lemma} and~\ref{degree-three-adj-to-two-degree-threes-lemma} we use a slightly more complicated fractional offshoot-assignment with respect to degree-two vertices.  Note that a fractional $k$-arborization of $G - V(H)$ that is ``compatible'' with $\phi$ is not necessarily an $(L, o)$-arborization, although it is necessarily an $L$-arborization.

Proposition~\ref{offshoot-coloring-proposition} thus provides sufficient conditions for a graph to have a fractional $(L, o)$-arborization.  These conditions involve the following ``local'' form of a fractional $k$-arborization.

\begin{definition}
  Let $G$ be a graph.
  \begin{itemize}
  \item A \textit{demand function} for $G$ is a function $f : V(G) \rightarrow [0, 1]\cap\mathbb Q$.
  \item If $f$ is a demand function for a graph $G$, an \textit{$f$-arborization} is a fractional arborization $\phi$ such that for every $v\in V(G)$, we have $\phi(v) \subseteq [0, 1]$ and $\meas{\phi(v)} \geq f(v)$.
  \end{itemize}
\end{definition}

Note that the fractional vertex-arboricity of $G$ is the infimum over all positive real numbers $k$ such that $G$ admits an $f$-arborization where $f(v) = 1/k$ for each $v\in V(G)$.

Our strategy for finding a fractional $(L, o)$-arborization is to partition the real line so that for each part and each vertex $v$, the part is contained entirely in one of $o(v)$, $L(v)\setminus o(v)$, and $\mathbb R\setminus L(v)$, treat each part like the $[0, 1]$-interval by ``scaling'' and find an $f$-arborization for some appropriately chosen demand function $f$, so that these $f$-arborizations, after ``scaling'', can be combined to obtain an $(L, o)$-arborization.  For each part of the real line, we need the corresponding $f$-arborization to have the following property with respect to the set of vertices $v$ such that $o(v)$ is contained in the part, like in Definition~\ref{offshoot-assignment-definition}.

\begin{definition}
  Let $O\subseteq V(G)$.  A fractional arborization $\phi$ of $G$ \textit{respects $O$} if for each path $P$ in $G$ with ends in $O$, we have $\cap_{v\in V(P)}\phi(v) = \varnothing$.  
\end{definition}

Finally, we state and prove Proposition~\ref{offshoot-coloring-proposition}.

\begin{proposition}\label{offshoot-coloring-proposition}
  Let $(L, o)$ be a fractional offshoot-assignment for $G$, and for every pair of disjoint subsets $O, X\subseteq V(G)$, let $f_{X,O}$ be a demand function for $G - X$.  If 
  \begin{itemize}
  \item $G - X$ has a fractional $f_{X,O}$-arborization respecting $O$ for every $O, X$, and
  \item every vertex $v\in V(G)$ satisfies
  \begin{equation}\label{covering-equation}
    \sum_{X\subseteq V(G)}\sum_{O\subseteq V(G)\setminus X} f_{X,O}(v)\cdot \meas{\left(\left(\cap_{u \in V(G)\setminus X} L(u)\setminus o(u)\right) \bigcap \left(\cap_{u\in O}o(u)\right)\right)\setminus \left(\cup_{u\in X}L(u)\right)} \geq 1,
  \end{equation}
  \end{itemize}
  then $G$ has a fractional $(L, o)$-arborization.
\end{proposition}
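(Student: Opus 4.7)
The plan is to carry out the partition-and-scale strategy sketched in the paragraph just before Definition~\ref{offshoot-assignment-definition}. For each disjoint pair $X, O \subseteq V(G)$, I would define
\[
S_{X,O} = \Bigl(\bigcap_{u \in O} o(u)\Bigr) \cap \Bigl(\bigcap_{u \in V(G)\setminus(X\cup O)} (L(u)\setminus o(u))\Bigr) \cap \Bigl(\bigcap_{u\in X}(\mathbb R \setminus L(u))\Bigr),
\]
which is precisely the set whose measure appears in~\eqref{covering-equation}. Since $o(u)\subseteq L(u)$ for every $u$, each $\alpha\in\mathbb R$ belongs to a unique $S_{X,O}$, namely the one with $X = \{u: \alpha\notin L(u)\}$ and $O = \{u: \alpha\in o(u)\}$, so the sets $S_{X,O}$ partition $\mathbb R$. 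For each disjoint pair $(X,O)$ I would then invoke the first hypothesis to fix a fractional $f_{X,O}$-arborization $\psi_{X,O}$ of $G - X$ respecting $O$, and by standard measure theory fix a measurable bijection $\eta_{X,O} : [0,1] \to S_{X,O}$ (modulo null sets) that scales Lebesgue measure by a factor of $\meas{S_{X,O}}$.

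With this setup in place, I would define
\[
\phi(v) \;=\; \bigcup_{\substack{(X,O)\text{ disjoint}\\ v \notin X}} \eta_{X,O}\bigl(\psi_{X,O}(v)\bigr)
\]
for each $v\in V(G)$, and verify that $\phi$ is a fractional $(L,o)$-arborization of $G$. Containment $\phi(v)\subseteq L(v)$ is automatic because $S_{X,O}\subseteq L(v)$ whenever $v\notin X$ (using either $v\in O$ or $v\in V(G)\setminus(X\cup O)$). The measure condition $\meas{\phi(v)}\geq 1$ follows from the disjointness of the $S_{X,O}$, the scaling property of $\eta_{X,O}$, and the inequality $\meas{\psi_{X,O}(v)}\geq f_{X,O}(v)$, which together convert~\eqref{covering-equation} into the required lower bound. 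The forest property is similarly direct: since each $\alpha \in \mathbb R$ lies in a unique $S_{X,O}$, the set $\{v : \alpha \in \phi(v)\}$ is contained in $\{v\in V(G)\setminus X : \eta_{X,O}^{-1}(\alpha)\in \psi_{X,O}(v)\}$, which induces a forest in $G - X$ because $\psi_{X,O}$ is a fractional arborization.

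The remaining condition---that $\phi$ is in fact an $(L,o)$-arborization rather than merely an $L$-arborization---is where the ``respects $O$'' hypothesis earns its keep. Given a path $P$ in $G$ with ends $x, y$ and any $\alpha \in o(x)\cap o(y) \cap \bigl(\bigcap_{v\in V(P)}\phi(v)\bigr)$, the unique part $S_{X,O}$ containing $\alpha$ satisfies $\{x,y\}\subseteq O$ (from $\alpha\in o(x)\cap o(y)$), together with $V(P)\cap X = \varnothing$ and $\eta_{X,O}^{-1}(\alpha) \in \bigcap_{v\in V(P)}\psi_{X,O}(v)$ (from $\alpha \in S_{X,O}\cap \phi(v)$ for each $v\in V(P)$). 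This contradicts the fact that $\psi_{X,O}$ respects $O$, because $P$ is then a path in $G - X$ with both ends in $O$. I do not expect any substantial obstacle: the only non-combinatorial input is the standard construction of the measure-scaling maps $\eta_{X,O}$, and essentially all of the genuine content has already been bundled into the two hypotheses of the proposition.
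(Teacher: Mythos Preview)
Your proposal is correct and follows essentially the same approach as the paper: both arguments define the same pairwise disjoint sets (your $S_{X,O}$ is the paper's $C_{X,O}$), scale the hypothesized $f_{X,O}$-arborizations into those sets, take the union, and verify the $(L,o)$-arborization property using the ``respects $O$'' hypothesis exactly as you do. Your write-up is simply a bit more explicit about the scaling map $\eta_{X,O}$ and the pointwise forest check, whereas the paper compresses that step into the single assertion that a suitable $\phi_{X,O}$ with $\phi_{X,O}(v)\subseteq C_{X,O}$ and $\mu(\phi_{X,O}(v))\geq f_{X,O}(v)\cdot\mu(C_{X,O})$ exists.
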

\begin{proof}
  For each $X\subseteq V(G)$ and $O\subseteq V(G)\setminus X$, let
  \begin{equation*}
    C_{X, O} = \left(\left(\cap_{u \in V(G)\setminus X} L(u)\setminus o(u)\right) \bigcap \left(\cap_{u\in O}o(u)\right)\right)\setminus \left(\cup_{u\in X}L(u)\right).
  \end{equation*}
  Since $G - X$ has a fractional $f_{X, O}$-arborization respecting $O$, there is a fractional arborization $\phi_{X, O}$ respecting $O$ such that every $v\in V(G)$ satisfies $\phi_{X, O}(v)\subseteq C_{X, O}$ and $\meas{\phi_{X, O}(v)} \geq f_{X, O}(v)\cdot \meas{C_{X, O}}$.  For each $v\in V(G)$, let $\phi(v) = \cup_{X\not\ni v}\cup_{O\subseteq V(G)\setminus X}\phi_{X, O}(v)$.  We claim that $\phi$ is a fractional $(L, o)$-arborization.

  Note that if $(X, O) \neq (X', O')$, then $C_{X, O} \cap C_{X', O'} = \varnothing$, so $\phi_{X, O}(v) \cap \phi_{X', O'}(v) = \varnothing$ for each $v\in V(G)$.   Thus, $\meas{\phi(v)}$ is at least the left side of~\eqref{covering-equation}, and $\phi$ is a fractional $L$-arborization.  Moreover, for any $X\subseteq V(G)$ and path $P$ in $G - X$, if $x$ and $y$ are the ends of $P$ and $x, y \in O$, then since $\phi_{X, O}$ respects $O$, we have $\cap_{v \in V(P)} \phi_{X, O}(v) = \varnothing$.  If one of $x$ or $y$, say $x$, is not in $O$, then $\phi_{X, O}(x) \cap o(x) = \varnothing$ by the definition of $C_{X, O}$.  In either case, we have $(o(x) \cap o(y))\bigcap (\cap_{v\in V(P)}\phi_{X, O}(v)) = \varnothing$, so $\phi$ is a fractional $(L, o)$-arborization, as required.
\end{proof}


\subsection{Reducible configurations}

In this subsection we prove Lemmas~\ref{effective-min-degree-two-lemma} and~\ref{degree-three-adj-to-two-degree-threes-lemma}.  For $k \in \mathbb Q$, we say a graph $G$ is \textit{$k$-arborically critical} if it has fractional vertex arboricity greater than $k$ but all proper subgraphs of $G$ have fractional vertex arboricity at most $k$.  A hypothetical counterexample to Theorem~\ref{main-thm} contains a $(2 - \eps)$-arborically critical subgraph for $\eps = 1/324$, and Lemmas~\ref{effective-min-degree-two-lemma} and~\ref{degree-three-adj-to-two-degree-threes-lemma} specify configurations that do not appear in such a subgraph.

For small $\eps > 0$, there is quite a bit of flexibility when it comes to assigning ``color'' to vertices of degree at most two in a fractional $(2 - \eps)$-arborization.  For that reason, a perhaps more important notion than the degree of a vertex is the number of neighbors of degree at least three that it has, which we refer to as its \textit{effective degree}.  Our first reducible configuration involves a vertex of effective degree at most two with small total degree.

\begin{lemma}\label{effective-min-degree-two-lemma}
  Let $\eps \leq 5/49$.  If $G$ is a $(2 - \eps)$-arborically critical graph and $v\in V(G)$ has effective degree at most two and a neighbor of degree two, then $d(v) \geq 10$.
\end{lemma}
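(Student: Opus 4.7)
The plan is to suppose for contradiction that $v$ satisfies the hypotheses of the lemma but $d(v) \le 9$. Label $v$'s degree-two neighbors $u_1, \dots, u_t$ and its neighbors of degree at least three $a_1, \dots, a_s$; by hypothesis $t \ge 1$, $s \le 2$, and $s+t \le 9$. For each $i$, let $w_i$ be the neighbor of $u_i$ other than $v$. Set $u := u_1$ and $X := \{v, u_1, \dots, u_t\}$.

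First I would invoke criticality: since $G - u$ is a proper subgraph of $G$, it admits a fractional $(2-\eps)$-arborization $\phi$. Let $\phi_0 := \phi|_{V(G)\setminus X}$, and define a fractional offshoot-assignment $(L, o)$ for $G[X]$ by $o(x) := \bigcup_{y \in N_G(x)\setminus X}\phi_0(y)$ and $L(x) := (0, 2-\eps) \setminus b(x)$, where $b(x)$ is the union over paths $P$ in $G - X$ joining two neighbors of $x$ of $\bigcap_{p\in V(P)}\phi_0(p)$. By the discussion preceding Proposition~\ref{offshoot-coloring-proposition}, it suffices to exhibit a fractional $(L, o)$-arborization of $G[X]$, since combining it with $\phi_0$ produces a fractional $(2-\eps)$-arborization of $G$, contradicting criticality. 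A crucial observation underlying the ``recoloring'' strategy from Section~\ref{methods-subsection} is that the original coloring $\phi(v)$ already lies in $L(v)$: any $\alpha \in \phi(v) \cap b(v)$ would provide an $\alpha$-monochromatic cycle through $v$ in $G - u$, contradicting that $\phi$ is an arborization. In particular $\mu(L(v)) \ge 1$.

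Next I would apply Proposition~\ref{offshoot-coloring-proposition} to the star $G[X]$. For each leaf $u_i$, $b(u_i) = \varnothing$ (since $u_i$ has only one external neighbor), so $L(u_i) = (0, 2-\eps)$ and $o(u_i) = \phi_0(w_i)$; for $v$, $o(v) = \bigcup_j \phi_0(a_j)$ and $\mu(L(v)) \ge 1$. For each pair $(X', O)$ with $X' \subseteq X$ and $O \subseteq X\setminus X'$, I would construct a demand function $f_{X', O}$ on $X\setminus X'$ and verify the two hypotheses of the proposition. The first hypothesis --- existence of an $f_{X', O}$-arborization of $G[X] - X'$ respecting $O$ --- is routine since $G[X] - X'$ is always a subgraph of $K_{1,t}$: subdividing the unit interval into a few pieces and assigning disjoint pieces to the center and each leaf in $O$ kills all monochromatic 2-paths between $O$-endpoints.

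The main obstacle is verifying the covering inequality~\eqref{covering-equation} at each $x \in X$. The interval $(0, 2-\eps)$ is naturally partitioned by the $s + t \le 9$ external measurable sets $\phi_0(w_1), \dots, \phi_0(w_t), \phi_0(a_1), \dots, \phi_0(a_s)$, each of measure exactly $1$, into up to $2^{s+t}$ regions whose measures an adversary may arrange pessimistically subject to those inclusion-exclusion constraints. We then need to choose demands $f_{X', O}$ so that the left-hand side of~\eqref{covering-equation} exceeds $1$ at each $x$ against every admissible measure profile. The anticipated tight regime is $s = 2$, $t = 7$ (the minimum number of degree-two leaves consistent with $d(v) \le 9$ and $s = 2$), because that minimizes the slack the $u_i$'s provide to $v$ through the offshoot mechanism; pushing this extremal configuration through the linear program for the demand weights should pinpoint the threshold $\eps \le 5/49$.
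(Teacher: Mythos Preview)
Your outline matches the paper's architecture: delete a degree-two neighbour, restrict the resulting arborization $\phi$ to $G-X$, build an offshoot-assignment on the star $G[X]$, and invoke Proposition~\ref{offshoot-coloring-proposition}; the recolouring observation $\phi(v)\cap b(v)=\varnothing$ (hence $\meas{L(v)}\ge 1$) is exactly the paper's first key bound. The final step, however, is not the routine LP you describe, and as written there is a genuine gap.

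First, your atomic partition of $(0,2-\eps)$ by the sets $\phi_0(w_i),\phi_0(a_j)$ omits $b(v)$, yet $L(v)=(0,2-\eps)\setminus b(v)$ enters every region $C_{X',O}$ with $v\notin X'$, so the covering sum for $v$ cannot even be written down from that partition alone. The structural fact that rescues this is $b(v)\subseteq\phi_0(a_1)\cap\phi_0(a_2)$ (every blocking path has both $a_j$ as endpoints), which, after replacing your $o(v)$ by $o(v)\cap L(v)$ (a small correction required for $(L,o)$ to satisfy Definition~\ref{offshoot-assignment-definition}), yields the paper's second key bound $\meas{o(v)}\le 2-2\meas{b(v)}$. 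Second, and more seriously, no single choice of demand weights works ``against every admissible measure profile'': when $\meas{b(v)}$ is close to $1-\eps$ one is forced to take $f_{X',O}(v)=1$ even for $v\in O$, whence $f_{X',O}(u_i)=0$ for every $u_i\in O$, and the leaves then survive their covering inequality only because $\meas{o(v)}$ is correspondingly tiny by the bound above. The paper therefore lets the demands depend on $\phi$, splitting at the threshold $\meas{b(v)}=1-7\eps/5$: in the small-$b$ regime it uses $f(v)=6/7$ and $f(u_i)=1/7$ on offshoot atoms, while in the large-$b$ regime it uses $f(v)=1$ and $f(u_i)=0$. The constant $5/49$ emerges from the leaf inequality $(1-\eps)+\tfrac{1}{7}\bigl(1-\tfrac{14\eps}{5}\bigr)\ge 1$ in the large-$b$ regime, not from the small-$b$ LP you anticipate.
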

\begin{proof}
  Suppose to the contrary that $v$ has effective degree at most two, a neighbor of degree two, and degree at most nine.  We assume that $d(v) = 9$, because the proof in the case $d(v) \leq 8$ is strictly easier.  Thus, since $v$ has effective degree two, $v$ has at least seven degree-two neighbors, say $u_1, \dots, u_7$.  Let $x$ and $y$ denote the other two neighbors of $v$ distinct from $u_1, \dots, u_7$.  See Figure~\ref{degree-two-reducible-config-labelling} for an illustration.

  \begin{figure}
    \centering
    \begin{tikzpicture}[scale = 2]
      \tikzstyle{internal} = [circle, fill];
      \tikzstyle{external} = [circle, draw];
      \node[internal, label=90:$v$] (v) at (0, 0) {};

      \node[external, label=90:$u'_1$] (u'1) at ($(v) + (180:1)$) {};
      \node[internal, label=90:$u_1$] (u1) at ($(v)!.5!(u'1)$) {};
      \node[external] (u'2) at ($(v) + (-150:1)$) {};
      \node[internal] (u2) at ($(v)!.5!(u'2)$) {};
      \node[external] (u'3) at ($(v) + (-120:1)$) {};
      \node[internal] (u3) at ($(v)!.5!(u'3)$) {};
      \node[external] (u'4) at ($(v) + (-90:1)$) {};
      \node[internal] (u4) at ($(v)!.5!(u'4)$) {};
      \node[external] (u'5) at ($(v) + (-60:1)$) {};
      \node[internal] (u5) at ($(v)!.5!(u'5)$) {};
      \node[external] (u'6) at ($(v) + (-30:1)$) {};
      \node[internal] (u6) at ($(v)!.5!(u'6)$) {};
      \node[external, label=90:$u'_7$] (u'7) at ($(v) + (0:1)$) {};
      \node[internal, label=90:$u_7$] (u7) at ($(v)!.5!(u'7)$) {};
      \draw (v) -- (u'1);
      \draw (v) -- (u'2);
      \draw (v) -- (u'3);
      \draw (v) -- (u'4);
      \draw (v) -- (u'5);
      \draw (v) -- (u'6);
      \draw (v) -- (u'7);

      \node[external, label=-30:$y$] (y) at ($(v) + (60:1)$) {};
      \node[external, label=-150:$x$] (x) at ($(v) + (120:1)$) {};
      \draw (v) -- (x); \draw (v) -- (y);
    \end{tikzpicture}
    \caption{Reducible configuration in Lemma~\ref{effective-min-degree-two-lemma}}
    \label{degree-two-reducible-config-labelling}
  \end{figure}
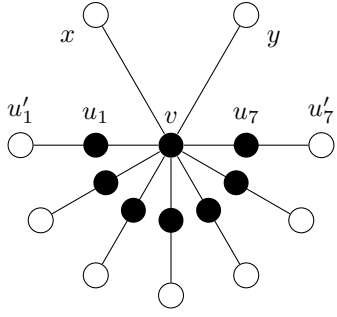

      Since $G$ is $(2 - \eps)$-arborically critical, there is a fractional $(2 - \eps)$-arborization $\phi$ of $G - \{u_1, \dots, u_7\}$ (we note that here is where we need $v$ to have at least one neighbor of degree two).  Let $H = G[\{v, u_1, \dots, u_7\}]$, let $\phi'$ be the restriction of $\phi$ to $G - V(H)$, and define the following fractional offshoot-assignment $(L, o)$ for $H$:
    \begin{itemize}
    \item let $L(v) = (0, 2 - \eps) \setminus B$, where $B$ is the union taken over all cycles $C$ in $G - \{u_1, \dots, u_7\}$ containing $v$ of $\cap_{u\in V(C - v)}\phi(u)$,
    \item let $o(v) = L(v) \cap (\phi(x) \cup \phi(y))$,
    \item for $i\in[7]$, let $L(u_i) = (0, 2 - \eps)$, and
    \item for $i\in[7]$, let $o(u_i) = \phi(u'_i)$.
    \end{itemize}
    Crucially, if $H$ has a fractional $(L, o)$-arborization, then $\phi'$ can be extended to a fractional $(2 - \eps)$-arborization of $G$, contradicting the criticality of $G$.  The remainder of the proof is thus devoted to showing that $H$ has a fractional $(L, o)$-arborization, using Proposition~\ref{offshoot-coloring-proposition}.

    First, observe that $B \cap \phi(v) = \varnothing$, so $\meas{B} + \meas{\phi(v)} \leq 2 - \eps$, and therefore
    \begin{equation}
      \label{v-has-enough-color}
      \meas{B} \leq 1 - \eps.
    \end{equation}
    Now we claim that
    \begin{equation}
      \label{v-offshoot-bound}
      \meas{o(v)} \leq 2 - 2\meas{B}.
    \end{equation}
    Since, $B \cap o(v) = \varnothing$, we have
    \begin{equation*}
      \meas{o(v)} \leq \meas{\phi(x) \cup \phi(y)} - \meas{B},
    \end{equation*}
    and since $B\subseteq \phi(x) \cap \phi(y)$, we have
    \begin{equation*}
      \meas{\phi(x) \cup \phi(y)} \leq \meas{\phi(x)} + \meas{\phi(y)} - \meas{\phi(x) \cap \phi(y)} \leq 2 - \meas{B}.
    \end{equation*}
    Combining the two inequalities above, we have $\meas{o(v)} \leq 2 - 2\meas{B}$, as claimed.

    We now define demand functions $f_{X, O}$ for every pair of disjoint subsets $X, O\subseteq V(H)$ in order to apply Proposition~\ref{offshoot-coloring-proposition}.  When $v\in O$, we define these functions differently depending on whether $\meas{B} > 1 - 7\eps/5$.  We claim there exist demand functions $f_{X,O}$ for every pair of disjoint subsets $X, O\subseteq V(H)$ such that $G - X$ has a fractional $f_{X,O}$-arborization and moreover these functions satisfy the following:
    \begin{enumerate}[(i)]
    \item\label{v-demands-all-free-color} for $i\in[7]$, if $u_i \notin X\cup O$, then $f_{X, O}(u_i) = 1$, and likewise for $v$,
    \item\label{v-demands-at-least-2/3} if $v\notin X$, then $f_{X, O}(v) \geq 6/7$.
    \item\label{v-demands-all-if-much-blocked} if $v \in O$ and $\meas{B} > 1 - 7\eps/5$, then $f_{X, O}(v) = 1$, and
    \item\label{u_i-demands-at-least-1/3} for $i\in[7]$, if $u_i\notin X$, then $f_{X, O}(u_i) \geq 1/7$, unless $v \in O$ and $\meas{B} > 1- 7\eps/5$.
    \end{enumerate}
    Now we specify these functions.  First, in all cases, if $u_i \notin X\cup O$ for $i\in[7]$, then we let $f_{X, O}(u_i) = 1$, and similarly we let $f_{X, O}(v) = 1$ if $v\notin X\cup O$.

    We first suppose that either $\meas{B} \leq 1 - 7\eps/5$ or $v \notin O$.  For $i\in[7]$, if $u_i \in O$, let $f_{X, O}(u_i) = 1/7$, and if $u_i\notin X\cup O$, let $f_{X, O}(u_i) = 1$.  If $v \in O$, let $f_{X, O}(v) = 6/7$.  To certify that $H - X$ has a fractional $f_{X, O}$-arborization, we assign $(0, 6/7)$ to $v$, we assign $(6/7, 1)$ to every $u_i \in O$ for $i\in[7]$, and we assign $(0, 1)$ to $u_i \notin X\cup O$ for $i\in[7]$.  Since $f_{X, O}$ satisfies~\ref{v-demands-all-free-color}-\ref{u_i-demands-at-least-1/3}, we may now assume $v\notin O$.  If $v\notin X$, we let $f_{X, O}(v) = 1$ as already stated, and to certify that $H - X$ has a fractional $f_{X, O}$-arborization, we assign $((i - 1)/7, i/7)$ to $u_i$ for each $i\in[7]$ (if $u_i\notin X$). Since $f_{X, O}$ satisfies~\ref{v-demands-all-free-color}-\ref{u_i-demands-at-least-1/3}, we now assume $v\in X$.  In this case, we let $f_{X, O}(u) = 1$ for all $u \in V(H - X)$, and assigning $(0, 1)$ to each vertex in $V(H - X)$ certifies there is a fractional $f_{X, O}$-arborization.  Since $f_{X, O}$ satisfies~\ref{v-demands-all-free-color}-\ref{u_i-demands-at-least-1/3}, this case is complete.

    Now suppose $v\in O$ and $\meas{B} > 1 - 7\eps/5$.  In this case, we let $f_{X, O}(v) = 1$ and $f_{X, O}(u_i) = 0$ (if $u_i\notin X$) for all $i\in[7]$.  Assigning $(0, 1)$ to $v$ certifies there is a fractional $f_{X, O}$-arborization, and $f_{X, O}$ satisfies~\ref{v-demands-all-free-color}-\ref{u_i-demands-at-least-1/3}, so the claim holds.

    Now we combine~\eqref{v-has-enough-color} and~\eqref{v-offshoot-bound} with~\ref{v-demands-all-free-color}-\ref{u_i-demands-at-least-1/3} and apply Proposition~\ref{offshoot-coloring-proposition} to show that $H$ has a fractional $(L, o)$-arborization.  It suffices to show that $v$, $u_1, \dots, u_7$ satisfy~\eqref{covering-equation}.

    We first show that $v$ satisfies~\eqref{covering-equation}.  By~\ref{v-demands-all-free-color} and~\ref{v-demands-at-least-2/3}, the left side of~\eqref{covering-equation} for $v$ is at least $\meas{L(v)\setminus o(v)} + 6\meas{o(v)}/7 = \meas{L(v)} - \meas{o(v)}/7$, and by~\eqref{v-offshoot-bound}, this quantity is at least
    \begin{equation*}
      (2 - \eps - \meas{B}) - \frac{1}{7}(2 - 2\meas{B}) = \frac{5}{7}(1 - 7\eps/5 - \meas{B}) + 1.
    \end{equation*}
    Thus, if $\meas{B} \leq 1 - 7\eps/5$, then the above quantity is at least 1, and~\eqref{covering-equation} holds, as desired.  Therefore we may assume that $\meas{B} > 1 - 7\eps/5$.  Now by~\ref{v-demands-all-free-color} and~\ref{v-demands-all-if-much-blocked}, the left side of~\eqref{covering-equation} is equal to $\meas{L(v)}$, and by~\eqref{v-has-enough-color}, $\meas{L(v)} \geq 1$, so~\eqref{covering-equation} holds, as required.

    Now we show that $u_1$ satisfies~\eqref{covering-equation}; the proof for $u_2, \dots, u_7$ is the same by symmetry.  First note that if $\meas{B} > 1 - 7\eps/5$, then by~\eqref{v-offshoot-bound}, $\meas{o(v)} \leq 14\eps/5$.  Thus, by~\ref{v-demands-all-free-color} and~\ref{u_i-demands-at-least-1/3}, the left side of~\eqref{covering-equation} is at least $(1 - \eps) + (1/7)(1 - 14\eps/5)$, the worst case being when $o(v) \subseteq o(u_1)$ and $\meas{o(v)} = 14\eps/5$.  Since $\eps \leq 5/49$, this quantity is at least 1, so~\eqref{covering-equation} holds, as required.  Therefore by Proposition~\ref{offshoot-coloring-proposition}, $H$ has a fractional $(L, o)$-arborization, so $\phi'$ can be extended to a fractional $(2 - \eps)$-arborization of $G$, contradicting the criticality of $G$.
\end{proof}

For the second reducible configuration, we introduce a distinction between \textit{light} and \textit{heavy} vertices.  A vertex is \textit{light} if it has effective degree at most three and degree at most five, and otherwise it is \textit{heavy}.

\begin{lemma}\label{degree-three-adj-to-two-degree-threes-lemma}
  Let $\eps \leq 1/324$.  If $G$ is a $(2 - \eps)$-arborically critical graph of girth at least five and $v\in V(G)$ has degree three, then $v$ has at least two heavy neighbors.
\end{lemma}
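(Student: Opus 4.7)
My plan is to follow the template of Lemma~\ref{effective-min-degree-two-lemma}: assume to the contrary that $v$ has degree three with at most one heavy neighbor, pick a subgraph $H$ around $v$, obtain via criticality a fractional $(2-\eps)$-arborization of $G$ minus a single well-chosen vertex of $H$, restrict it to $G - V(H)$, and then use Proposition~\ref{offshoot-coloring-proposition} to extend back to a fractional $(2-\eps)$-arborization of $G$, contradicting criticality.

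Let $x, y$ be two light neighbors of $v$, and let $z$ be its third neighbor. The first step is to extract the available structural information. Since $v$ has degree three, Lemma~\ref{effective-min-degree-two-lemma} forbids $v$ from simultaneously having effective degree at most two and a degree-two neighbor; hence $x, y, z$ all have degree at least three. Applying the same lemma to each light vertex (which has degree at most five) we deduce that $x$ and $y$ each either have effective degree exactly three or no degree-two neighbor. The girth-five hypothesis then gives that no pair of vertices in $\{x, y, z\}$ shares a common neighbor other than $v$, and that no degree-two neighbor of $x$ is adjacent to $y$ or $z$ (and symmetrically for $y$).

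Next I would let $D$ denote the set of degree-two neighbors of $x$ or $y$, set $H = G[\{v, x, y\} \cup D]$, and split into two subcases. If $D \neq \varnothing$, pick $u \in D$ (which has exactly one neighbor outside $V(H)$), obtain a fractional $(2-\eps)$-arborization $\phi$ of $G - u$ from criticality, and define a fractional offshoot-assignment $(L, o)$ on $H$ in direct analogy with the proof of Lemma~\ref{effective-min-degree-two-lemma}: $L(w) = (0, 2-\eps) \setminus B_w$, where $B_w$ collects the forbidden colors arising from cycles through $w$ in $G - u$, and $o(w)$ records the $\phi$-colors of the neighbors of $w$ outside $V(H)$. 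If $D = \varnothing$, then the structural observations force $d(x) = d(y) = 3$ and $H = G[\{v, x, y\}]$; in this subcase I would take $u = x$, apply criticality to $G - x$, and define $(L, o)$ analogously, keeping track of the fact that $x$ and $y$ now each have two outside neighbors of degree at least three. Finally I would verify the covering inequality~\eqref{covering-equation} using demand functions $f_{X, O}$ modeled on those of Lemma~\ref{effective-min-degree-two-lemma}, distinguishing regimes according to whether the measures of $B_v, B_x, B_y$ and of $o(v), o(x), o(y)$ are small or large; in each regime the certificate is a partition of $[0, 1]$ into sub-intervals whose lengths depend on how many vertices of $H - X$ lie in $O$.

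I expect the main obstacle to be the subcase $D = \varnothing$: here $x$ and $y$ each have two outside neighbors of degree at least three, the analogue of~\eqref{v-offshoot-bound} for $o(x)$ and $o(y)$ is correspondingly weaker, and the sharp constant $\eps \leq 1/324$ must come from carefully balancing the losses from $\meas{o(v)}, \meas{o(x)}, \meas{o(y)}$ against the available measure in the $L(\cdot)$'s, while using the girth-five hypothesis to control how often two outside neighbors of $x$ (or of $y$) can be joined by a short monochromatic path in $G - V(H)$.
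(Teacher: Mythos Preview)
Your plan diverges from the paper's proof in two essential places, and the first creates a genuine gap.

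In the $D=\varnothing$ subcase you delete $x$, but $x$ has two neighbors outside $H$.  The bound $\meas{B_w}\le 1-\eps$ that you need for every $w\in V(H)$ is obtained (as in Lemma~\ref{effective-min-degree-two-lemma}) from the fact that $w$ is already coloured in $\phi$ and $\phi(w)\cap B_w=\varnothing$; this argument is unavailable for the deleted vertex.  Since $x$ has two outside neighbors $x_1,x_2$, the set $B_x$ can equal $\phi(x_1)\cap\phi(x_2)$, which may have measure as large as $1$, and then $\meas{L(x)}=1-\eps<1$, so no fractional $(L,o)$-arborization can exist.  The paper avoids this by deleting $v$ in every case: with $H=\{v,v_1,v_2\}$ the vertex $v$ has exactly one outside neighbor $z$, so there are no cycles through $v$ in $G-\{v_1,v_2\}$ and one simply takes $L(v)=(0,2-\eps)$, while $v_1,v_2$ are coloured in $\phi$ and inherit $\meas{b(v_i)}\le 1-\eps$.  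Your bifurcation into $D=\varnothing$ versus $D\ne\varnothing$ and your choice of deleted vertex are both unnecessary once $v$ is removed.

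The second divergence is that the paper does \emph{not} place the degree-two neighbors inside $H$.  Instead, with $\phi$ already defined on them, it first recolours each degree-two vertex $u_i$ so that $\meas{\phi(u_i)\cap\phi(u'_i)}=\eps$ exactly; after this normalisation the only contribution of $u_i$ to $o(v_i)$ is $\phi(u_i)\cap\phi(u'_i)$, of measure $\eps$, rather than the full $\phi(u'_i)$.  This is what yields $\meas{o(v_i)}\le 2+6\eps-2\meas{b(v_i)}$ and, via the threshold $1-17\eps$ in the demand functions, the constant $1/324$.  Your alternative of enlarging $H$ to include $D$ would require demand functions on a graph with up to seven vertices and you have not said how to choose them.

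Finally, the girth-five hypothesis does not ``control how often two outside neighbors can be joined by a short monochromatic path'': monochromatic paths under $\phi$ may be arbitrarily long regardless of girth.  The paper uses girth five only structurally, to ensure $v_1v_2\notin E(G)$ and that the four degree-two neighbors are distinct.  With the normalisation trick above, the hardest case is in fact $d(v_1)=d(v_2)=5$ (so $|D|=4$), not $D=\varnothing$, and the $6\eps$ correction it introduces is exactly what forces $\eps\le 1/324$.
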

\begin{proof}
  Suppose to the contrary that $v$ has at most one heavy neighbor, so $v$ has at least two light neighbors, say $v_1$ and $v_2$.  By Lemma~\ref{effective-min-degree-two-lemma}, $v_1$ and $v_2$ have degree at least three.  Moreover, $v_1$ and $v_2$ each have at most two neighbors of degree two, and if one of them does, then it has degree five.  Henceforth we assume that $v_1$ and $v_2$ have degree five, because the proof in the case that at least one has degree three or four is strictly easier.  For $i\in\{1, 2\}$, since $v_i$ is light, it has two degree-two neighbors, which we denote $u_i$ and $w_i$, and we let $u'_i$ and $w'_i$ denote the neighbors of $u_i$ and $w_i$, respectively, that are distinct from $v_i$.  We also let $x_i$ and $y_i$ denote the two neighbors of $v_i$ distinct from $u_i$ and $v$, and we let $z$ denote the neighbor of $v$ distinct from $v_1$ and $v_2$.  See Figure~\ref{reducible-config-labelling} for an illustration.  Since $G$ has girth at least five, $\{u_1, w_1\} \cap \{u_2, w_2\} = \varnothing$ and $v_1$ and $v_2$ are not adjacent.

  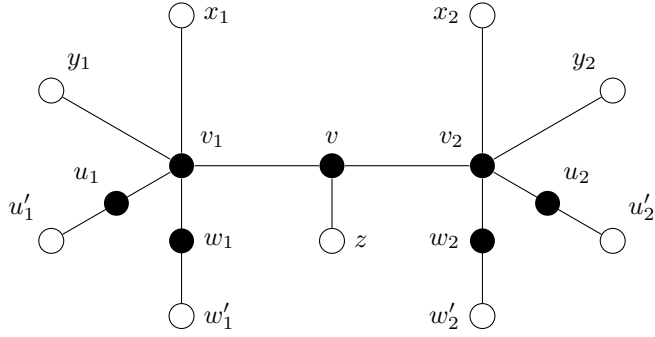
\begin{figure}
    \centering
    \begin{tikzpicture}[scale = 2]
      \tikzstyle{internal} = [circle, fill];
      \tikzstyle{external} = [circle, draw];
      \node[internal, label=90:$v$] (v) at (0, 0) {};

      \node[internal, label=45:$v_1$] (v1) at (-1, 0) {};
      \node[internal, label=135:$v_2$] (v2) at (1, 0) {};
      \draw (v1) -- (v) -- (v2);
      \node[external, label = 0:$x_1$] (x1) at ($(v1) + (90:1)$) {};
      \node[external, label = 60:$y_1$] (y1) at ($(v1) + (150:1)$) {};
      \node[external, label = 120:$u'_1$] (u'1) at ($(v1) + (-150:1)$) {};
      \node[internal, label = 120:$u_1$] (u1) at ($(v1)!.5!(u'1)$) {};
      \node[external, label = 0:$w'_1$] (w'1) at ($(v1) + (-90:1)$) {};
      \node[internal, label = 0:$w_1$] (w1) at ($(v1)!.5!(w'1)$) {};
      \draw (v1) -- (u'1); \draw (v1) -- (y1); \draw (v1) -- (x1); \draw (v1) -- (w'1);
      \node[external, label = 180:$x_2$] (x2) at ($(v2) + (90:1)$) {};
      \node[external, label = 120:$y_2$] (y2) at ($(v2) + (30:1)$) {};
      \node[external, label = 60:$u'_2$] (u'2) at ($(v2) + (-30:1)$) {};
      \node[internal, label = 60:$u_2$] (u2) at ($(v2)!.5!(u'2)$) {};
      \node[external, label = 180:$w'_2$] (w'2) at ($(v2) + (-90:1)$) {};
      \node[internal, label = 180:$w_2$] (w2) at ($(v2)!.5!(w'2)$) {};
      \draw (v2) -- (u'2); \draw (v2) -- (y2); \draw (v2) -- (x2); \draw (v2) -- (w'2);

      \node[external, label=0:$z$] (w) at (0, -.5) {};
      \draw (v) -- (w);
    \end{tikzpicture}
    \caption{Reducible configuration in Lemma~\ref{degree-three-adj-to-two-degree-threes-lemma}.}
    \label{reducible-config-labelling}
  \end{figure}

  Since $G$ is $(2 - \eps)$-arborically critical, there is a fractional $(2 - \eps)$-arborization $\phi$ of $G - v$.  First, note that for each $i\in\{1, 2\}$, we have $\meas{\phi(u_i)\cap \phi(u'_i)} \geq \eps$ and likewise for $w_i$ and $w'_i$; by letting $C$ be a measurable subset of $\phi(u_i)\cap \phi(u'_i)$ of measure at least $\eps$ and possibly reassigning $\phi(u_i)$ so that $\phi(u_i) = \left((0, 2 - \eps)\setminus \phi(u'_i)\right) \cup C$, we may assume without loss of generality that $\meas{\phi(u_i) \cap \phi(u'_i)} = \eps$, and we similarly may assume without loss of generality that $\meas{\phi(w_i) \cap \phi(w'_i)} = \eps$.  Now, let $\phi'$ be the restriction of $\phi$ to $G - \{v, v_1, v_2\}$, and define the following fractional offshoot-assignment $(L, o)$ for $G[\{v, v_1, v_2\}]$:
  \begin{itemize}
  \item for $i\in\{1, 2\}$, let $L(v_i) = (0, 2 - \eps)\setminus b(v_i)$, where $b(v_i)$ is the union taken over all cycles $C$ in $G - \{v, v_{3 - i}\}$ containing $v_i$ of $\cap_{u\in V(C - v_i)}\phi(u)$,
  \item for $i\in\{1, 2\}$, let $o(v_i) = L(v_i) \cap (\phi(x_i) \cup \phi(y_i) \cup (\phi(u_i) \cap \phi(u'_i)) \cup (\phi(w_i) \cap \phi(w'_i)))$,
  \item let $L(v) = (0, 2 - \eps)$, and
  \item let $o(v) = \phi(z)$.
  \end{itemize}
  Crucially, if $G[\{v, v_1, v_2\}]$ has a fractional $(L, o)$-arborization, then $\phi'$ can be extended to a fractional $(2 - \eps)$-arborization of $G$, contradicting the criticality of $G$.  The remainder of the proof is thus devoted to showing $G[\{v, v_1, v_2\}]$ has a fractional $(L, o)$-arborization, using Proposition~\ref{offshoot-coloring-proposition}.

  For $i\in\{1, 2\}$, let $b'(v_i) = b(v_i) \setminus ((\phi(u_i) \cap \phi(u'_i)) \cup (\phi(w_i) \cap \phi(w'_i)))$, and note that $b'(v_i) \subseteq \phi(x_i) \cap \phi(y_i)$. 
  First observe that $\phi(v_i) \cap b(v_i) = \varnothing$ for each $i\in\{1, 2\}$, so $\meas{b(v_i)} + \meas{\phi(v_i)} \leq 2 - \eps$, and therefore
  \begin{equation}
    \label{v_i-has-enough-color}
    \meas{b(v_i)} \leq 1 - \eps.
  \end{equation}

  Now we claim that for each $i\in\{1, 2\}$,
  \begin{equation}\label{v_i-offshoot-bound}
    \meas{o(v_i)} \leq 2 + 6\eps - 2 \meas{b(v_i)}.
  \end{equation}
  To deduce~\eqref{v_i-offshoot-bound}, it suffices to show that $\meas{(\phi(x_i) \cup \phi(y_i)) \cap o(v_i)} \leq 2  - 2\cdot b'(v_i)$, since $b'(v_i) \geq b(v_i) - 2\eps$ and $\meas{o(v_i)\setminus (\phi(x_i)\cup\phi(y_i))} \leq \meas{(\phi(u_i) \cap \phi(u'_i)) \cup (\phi(w_i) \cap \phi(w'_i))} \leq 2\eps$.  Now, since $b'(v_i) \cap o(v_i) = \varnothing$, we have
  \begin{equation*}
    \meas{(\phi(x_i) \cup \phi(y_i)) \cap o(v_i)} \leq \meas{\phi(x_i) \cup \phi(y_i)} - \meas{b'(v_i)},
  \end{equation*}
  and since $b'(v_i) \subseteq \phi(x_i) \cap \phi(y_i)$, we have
  \begin{equation*}
    \meas{\phi(x_i) \cup \phi(y_i)} \leq \meas{\phi(x_i)} + \meas{\phi(y_i)} - \meas{\phi(x_i) \cap \phi(y_i)} \leq 2 - \meas{b'(v_i)}.
  \end{equation*}
  Combining the two inequalities above, we obtain that $\meas{(\phi(x_i) \cup \phi(y_i)) \cap o(v_i)} \leq 2 - 2 \meas{b'(v_i)}$, as desired, so~\eqref{v_i-offshoot-bound} follows.  

  Having proved~\eqref{v_i-has-enough-color} and \eqref{v_i-offshoot-bound}, we now define demand functions $f_{X,O}$ for every pair of disjoint subsets $O, X \subseteq \{v, v_1, v_2\}$ in order to apply Proposition~\ref{offshoot-coloring-proposition}.  
  When $v_i \in O$ for some $i\in\{1, 2\}$, we define these functions differently depending on if $\meas{b(v_i)} > 1 - 17\eps$.
  We claim there exist demand functions $f_{X, O}$ for every pair of disjoint subsets $X, O\subseteq \{v, v_1, v_2\}$ such that $G - X$ has a fractional $f_{X,O}$-arborization and moreover these functions satisfy the following:
  \begin{enumerate}[(i)]
  \item\label{v_i-demands-all-of-free-color} if $v_1 \notin X\cup O$, then $f_{X, O}(v_1) = 1$, and likewise for $v_2$,
  \item\label{v_i-demands-at-least-3/5} if $v_1 \notin X$, then $f_{X, O}(v_1) \geq 3/5$, and likewise for $v_2$,
  \item\label{v_i-demands-all-if-much-blocked} if $v_1 \in O$ and $\meas{b(v_1)} > 1 - 17\eps$, then $f_{X, O}(v_1) = 1$, and likewise for $v_2$,

  \item\label{v-demands-4/5-of-free} if $v \notin X\cup O$, then $f_{X, O}(v) \geq 4/5$, unless $v_i \in O$ and $\meas{b(v_i)} > 1 - 17\eps$ for some $i\in\{1, 2\}$, and
  \item\label{v-demands-2/5-of-offshoot} if $v \in O$, then $f_{X, O}(v) \geq 2/5$, unless $v_i \in O$ and $\meas{b(v_i)} > 1 - 17\eps$ for some $i\in\{1, 2\}$.
  \end{enumerate}
  Now we specify these functions.  First, in all cases, if $v_i \notin X\cup O$ for $i\in\{1, 2\}$, then we let $f_{X, O}(v_i) = 1$.
  
  We first suppose there is no $v_i \in O$ with $\meas{b(v_i)} > 1 - 17\eps$.  If $v_1 \in O$, we let $f_{X, O}(v_1) = 3/5$.  If $v\in O$ as well, we let $f_{X, O}(v) = 2/5$.  In this case, we let $f_{X, O}(v_2) = 3/5$ if $v_2\in O$, and as already stated, $f_{X, O}(v_2) = 1$ if $v_2 \notin X\cup O$.  To certify that $G[\{v, v_1, v_2\}] - X$ has a fractional $f_{X, O}$-arborization, we assign $(0, 3/5)$ to $v_1$ and $(3/5, 1)$ to $v$.  If $v_2 \in O$, we assign $(0, 3/5)$ to $v_2$ and otherwise we assign $(0, 1)$ to $v_2$.  In either case, we have a fractional $f_{X, O}$-arborization and $f_{X, O}$ satisfies~\ref{v_i-demands-all-of-free-color}-\ref{v-demands-2/5-of-offshoot}.

  If $v_1 \in O$ and $v \notin X\cup O$, we let $f_{X, O}(v) = 4/5$.  In this case, we let $f_{X, O}(v_2) = 3/5$ if $v_2 \in O$, and as already stated, $f_{X, O}(v_2) = 1$ if $v_2\notin X\cup O$.  To certify that $G[\{v, v_1, v_2\}] - X$ has a fractional $f_{X, O}$-arborization, we assign $(0, 3/5)$ to $v_1$ and $(3/5, 1)\cup (0, 2/5)$ to $v_2$.  If $v_2 \in O$, we assign $(2/5, 1)$ to $v_2$ and otherwise we assign $(0, 1)$ to $v_2$.  In either case, we have a fractional $f_{X, O}$-arborization and $f_{X, O}$ satisfies~\ref{v_i-demands-all-of-free-color}-\ref{v-demands-2/5-of-offshoot}.

  The case $v_2 \in O$ is symmetrical, so we may assume $v_1, v_2 \notin O$.  In this case, we let $f_{X, O}(u) = 1$ for each vertex $u\in\{v, v_1, v_2\}\setminus X$, and assigning $(0, 1)$ to each vertex in $\{v, v_1, v_2\}\setminus X$ certifies there is a fractional $f_{X, O}$-arborization.  Since $f_{X, O}$ satisfies~\ref{v_i-demands-all-of-free-color}-\ref{v-demands-2/5-of-offshoot}, this case is complete.

  Now suppose a vertex $v_i \in O$ satisfies $\meas{b(v_i)} > 1 - 17\eps$.  In this case, we let $f_{X, O}(v_i) = 1$, we let $f_{X, O}(v_{3-i}) = 1$ (if $v_{3-i}\notin X$), and we let $f_{X, O}(v) = 0$ (if $v\notin X$).  Assigning $(0, 1)$ to $v_1$ and $v_2$ and assigning $\varnothing$ to $v$ certifies there is a fractional $f_{X, O}$-arborization, and $f_{X, O}$ satisfies~\ref{v_i-demands-all-of-free-color}-\ref{v-demands-2/5-of-offshoot}, as required, so the claim holds.

  Now we combine~\eqref{v_i-has-enough-color} and \eqref{v_i-offshoot-bound} with~\ref{v_i-demands-all-of-free-color}-\ref{v-demands-2/5-of-offshoot} and apply Proposition~\ref{offshoot-coloring-proposition} to show that $G[\{v, v_1, v_2\}]$ has a fractional $(L, o)$-arborization.  It suffices to show that $v$, $v_1$, and $v_2$ satisfy~\eqref{covering-equation}.

  We first show that $v_1$ satisfies~\eqref{covering-equation}; the proof is the same for $v_2$ by symmetry.
  By~\ref{v_i-demands-all-of-free-color} and~\ref{v_i-demands-at-least-3/5}, the left side of~\eqref{covering-equation} for $v_1$ is at least $\meas{L(v_1)\setminus o(v_1)} + 3\meas{o(v_1)}/5 = \meas{L(v_1)} - 2\meas{o(v_1)}/5$, and by~\eqref{v_i-offshoot-bound}, this quantity is at least
  \begin{equation*}
    (2 - \eps - \meas{b(v_1)}) - \frac{2}{5}\left(2 + 6\eps - 2\meas{b(v_1)}\right) = \frac{1}{5}\left(1 - 17\eps - \meas{b(v_1)}\right) + 1.
  \end{equation*}
  Thus, if $\meas{b(v_1)} \leq 1 - 17\eps$, then the above quantity is at least 1, and~\eqref{covering-equation} holds, as desired.  Therefore we may assume that $\meas{b(v_1)} > 1 - 17\eps$.  Now by~\ref{v_i-demands-all-of-free-color} and~\ref{v_i-demands-all-if-much-blocked}, the left side of~\eqref{covering-equation} is equal to $\meas{L(v_1)}$, and by~\eqref{v_i-has-enough-color}, $\meas{L(v_i)} \geq 1$, so~\eqref{covering-equation} holds, as required.

  Now we show that $v$ satisfies~\eqref{covering-equation}.  First note that for $i\in\{1, 2\}$, if $\meas{b(v_i)} > 1 - 17\eps$, then by~\eqref{v_i-offshoot-bound}, $\meas{o(v_i)} \leq 40\eps$.  Thus, by~\ref{v-demands-4/5-of-free} and~\ref{v-demands-2/5-of-offshoot}, the left side of~\eqref{covering-equation} for $v$ is at least $2/5 + (4/5)(1 - 81\eps)$, the worst case being when $o(v_1), o(v_2) \subseteq L(v)\setminus o(v)$ and $\meas{o(v_1)} = \meas{o(v_2)} = 40\eps$.  Since $\eps \leq 1 / 324$, this quantity is at least 1, so \eqref{covering-equation} holds, as required.  Therefore by Proposition~\ref{offshoot-coloring-proposition}, $G[\{v, v_1, v_2\}]$ has a fractional $(L, o)$-arborization, so $\phi'$ can be extended to a fractional $(2 - \eps)$-arborization of $G$, contradicting the criticality of $G$.  
\end{proof}


\section{Discharging}

The main result of this section is the following lemma which we prove using discharging.  It implies that a planar graph of girth at least five has a vertex of degree one or a configuration from Lemma~\ref{effective-min-degree-two-lemma} or~\ref{degree-three-adj-to-two-degree-threes-lemma}.  Recall that a vertex is \textit{light} if it has effective degree at most three and degree at most five, and otherwise it is \textit{heavy}.

\begin{lemma}\label{discharging-lemma}
  If $G$ is a graph with average degree less than $10/3$, then $G$ contains either
  \begin{enumerate}[(i)]
  \item\label{discharging:degree-one} a vertex of degree at most one,
  \item\label{discharging:adj-degree-twos} a pair of adjacent vertices of degree two,
  \item\label{discharging:effective-degree-two} a vertex $v$ with effective degree at most two and degree between three and nine, or
  \item\label{discharging:degree-three-adj-to-two-light} a vertex of degree three adjacent to at least two light neighbors.
  \end{enumerate}
\end{lemma}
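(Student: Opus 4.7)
The plan is a standard discharging argument on the degree sequence. I would assume for contradiction that $G$ contains none of the configurations (i)--(iv), assign each vertex $v$ the initial charge $\mathrm{ch}(v) = d(v) - 10/3$, and observe that by the average-degree hypothesis the total charge $\sum_{v \in V(G)} \mathrm{ch}(v) = 2|E(G)| - (10/3)|V(G)|$ is strictly negative. The goal is then to redistribute these charges by local rules so that every vertex ends with nonnegative charge, a contradiction.

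The only negative initial charges sit at degree-$2$ and degree-$3$ vertices (carrying $-4/3$ and $-1/3$ respectively), so I would use exactly two rules to cover these deficits: (R1) each vertex of degree at least $3$ sends $2/3$ to each of its degree-$2$ neighbors, and (R2) each heavy vertex sends $1/6$ to each of its degree-$3$ neighbors. A degree-$2$ vertex has both neighbors of degree at least $3$ by (i) and (ii), so it receives $2 \cdot (2/3) = 4/3$ under R1 and ends at $0$. A degree-$3$ vertex has effective degree at least $3$ by (iii), so it has no degree-$2$ neighbor and sends nothing under R1; it is light, so it sends nothing under R2; and by (iv) at least two of its three neighbors are heavy, giving it at least $2 \cdot (1/6) = 1/3$ incoming under R2 and final charge at least $0$.

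What remains is a finite check that no vertex of degree $d \geq 4$ is overdrawn. Writing $k$ for the number of degree-$2$ neighbors of such a vertex, configuration (iii) forces $k \leq d - 3$ whenever $d \leq 9$, and the total outgoing charge is at most $2k/3$ if the vertex is light (possible only when $d \in \{4,5\}$) and at most $2k/3 + (d-k)/6 = k/2 + d/6$ if it is heavy, since only the at most $d - k$ non-degree-$2$ neighbors can be degree $3$. A direct calculation of the final charge $d - 10/3 - (\text{sent})$ gives nonnegativity in every subcase for $4 \leq d \leq 9$, the tightest instance being $d = 6$, $k = 3$, where the final charge works out to $1/6$. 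For $d \geq 10$ the vertex is automatically heavy, and the quantity $k/2 + d/6$ is maximized at $k = d$, where it equals $2d/3$, so the final charge is at least $(d - 10)/3 \geq 0$.

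The main obstacle is simply fixing the two discharging coefficients so that both deficits are simultaneously covered without overburdening higher-degree vertices: the value $2/3$ in R1 is forced by (ii) (each degree-$2$ vertex has exactly two benefactors, jointly owing $4/3$), and $1/6$ in R2 is forced by (iv) (each degree-$3$ vertex has at least two heavy benefactors, jointly owing $1/3$). With those constants fixed, the verification at degrees $4$ through $9$ is the real work but is a routine finite arithmetic check; the tightness at $d = 6$, $k = 3$ suggests the choice is essentially optimal for this discharging scheme, and no reducible configuration about degree-$6$ vertices needs to be added to close the gap.
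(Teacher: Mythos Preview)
Your proposal is correct and follows essentially the same discharging argument as the paper: identical initial charges $d(v)-10/3$, the same two rules (each vertex sends $2/3$ to degree-two neighbors; each heavy vertex sends $1/6$ to degree-three neighbors), and the same case analysis by degree. The only cosmetic difference is that the paper argues contrapositively---locating a vertex of negative final charge and showing it must lie in one of the configurations---whereas you assume none of (i)--(iv) and verify all final charges are nonnegative; your uniform bound $2k/3+(d-k)/6$ packages the degree-$\geq 4$ verification a bit more compactly than the paper's case-by-case treatment, but the content is the same.
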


\begin{proof}
  For each $v\in V(G)$, we let the charge of $v$ be $ch(v) = d(v) - 10/3$.  Since $G$ has average degree less than $10/3$, we have $\sum_{v\in V(G)}ch(v) < 0$.  Now we redistribute the charges according to the following rules, and we let $ch_*$ denote the final charge:
  \begin{enumerate}[(R1), topsep = 6pt]
  \item\label{rules:degree-twos} Every vertex sends $2/3$ charge to each of its degree-two neighbors.
  \item\label{rules:degree-threes} If $v$ is a heavy vertex, then $v$ sends $1/6$ charge to every neighbor of degree three.  
  \end{enumerate}

  Since $\sum_{v\in V(G)}ch_*(v) = \sum_{v\in V(G)}ch(v) < 0$, there is a vertex $v$ with $ch_*(v) < 0$.

  We may assume $d(v) > 1$, or else~\ref{discharging:degree-one} holds, as desired.  We now claim that $d(v) < 10$.  Since $v$ sends at most $2d(v) / 3$ charge, we have $ch_*(v) \geq ch(v) - 2d(v) / 3 = d(v)/3 - 10/3$, and since $ch_*(v) < 0$, we have $d(v) < 10$, as claimed.

  Suppose $d(v) = 2$, so $v$ receives $2/3$ charge from both of its neighbors by \ref{rules:degree-twos}.  If $v$ does not have a degree-two neighbor, then $v$ sends no charge, so $ch_*(v) = ch(v) + 4/3 = 0$, contradicting that $ch_*(v) < 0$.  Thus, $v$ has a degree-two neighbor, so~\ref{discharging:adj-degree-twos} holds, as desired.

  Therefore we may assume $d(v) > 2$.  Suppose $d(v) = 3$.  We may assume $v$ has at least two heavy neighbors, or else~\ref{discharging:degree-three-adj-to-two-light} holds.  Thus, by~\ref{rules:degree-threes}, $v$ receives $1/6$ charge from each.  If $v$ does not have a degree-two neighbor, then $v$ sends no charge, so $ch_*(v) = ch(v) + 2/6 = 0$, contradicting that $ch_*(v) <  0$.  Thus, $v$ has a degree-two neighbor and thus effective degree at most two, so~\ref{discharging:effective-degree-two} holds, as desired.

  Therefore we may assume $d(v) > 3$.  Suppose $d(v) = 4$.  If $v$ has no degree-two neighbors, then $v$ sends no charge under~\ref{rules:degree-twos} and at most $4/6$ charge under~\ref{rules:degree-threes}, so $ch_*(v) \geq ch(v) - 2/3 \geq 0$, contradicting that $ch_*(v) < 0$.  Thus, $v$ has at least one degree-two neighbor, so $v$ is light and sends no charge under~\ref{rules:degree-threes}.  If $v$ has only one degree-two neighbor, then $v$ sends at most $2/3$ charge under~\ref{rules:degree-threes}, so $ch_*(v) \geq ch(v) - 2/3 \geq 0$, contradicting that $ch_*(v) < 0$.  Therefore $v$ has at least two degree-two neighbors, so~\ref{discharging:effective-degree-two} holds.

  Therefore we may assume $d(v) > 4$.  Suppose $d(v) = 5$.  If $v$ has at most one degree-two neighbor, then $v$ sends at most $2/3 + 4/6 = 4/3$ combined charge under~\ref{rules:degree-twos} and~\ref{rules:degree-threes}, so $ch_*(v) \geq ch(v) - 4/3 = 5 - 10/3 - 4/3 = 1/3$, contradicting that $ch_*(v) < 0$.  Thus, $v$ has at least two degree-two neighbors, so $v$ is light and sends no charge under~\ref{rules:degree-threes}.  If $v$ has only two degree-two neighbors, then $v$ sends at most $4/3$ charge under~\ref{rules:degree-twos}, again contradicting that $ch_*(v) < 0$.  Therefore $v$ has at least three degree-two neighbors, so~\ref{discharging:effective-degree-two} holds.

  Therefore we may assume $d(v) > 5$.  If $v$ has at most $d(v) - 3$ degree-two neighbors, then $v$ sends at most $2(d(v) - 3)/3 + 3/6$ combined charge under~\ref{rules:degree-twos} and~\ref{rules:degree-threes}, so $ch_*(v) \geq ch(v) - 2d(v) / 3 + 3/2 = d(v) / 3 - 11/6 \geq 1/6$, contradicting that $ch_*(v) < 0$.  Thus, $v$ has at least $d(v) - 2$ degree-two neighbors, so $v$ has effective degree at most two.  Since $d(v) \leq 9$,~\ref{discharging:effective-degree-two} holds.
\end{proof}

Finally, we combine Lemmas~\ref{effective-min-degree-two-lemma},~\ref{degree-three-adj-to-two-degree-threes-lemma}, and~\ref{discharging-lemma} to prove Theorem~\ref{main-thm}.

\begin{proof}[Proof of Theorem~\ref{main-thm}]
  Suppose to the contrary that there exists a planar graph of girth at least five with fractional vertex arboricity greater than $2 - 1/324$.  This graph contains a $(2 - 1/324)$-arborically critical subgraph, say $G$.  Since $G$ is $(2 - 1/324)$-arborically critical, it has minimum degree at least two, so $G$ does not satisfy outcome~\ref{discharging:degree-one} of Lemma~\ref{discharging-lemma}.  By Lemma~\ref{effective-min-degree-two-lemma}, $G$ does not satisfy outcomes~\ref{discharging:adj-degree-twos} or~\ref{discharging:effective-degree-two} of Lemma~\ref{discharging-lemma}, and by Lemma~\ref{degree-three-adj-to-two-degree-threes-lemma}, $G$ does not satisfy outcome~\ref{discharging:degree-three-adj-to-two-light}.  Since $G$ is planar and has girth at least five, Euler's formula implies that $G$ has average degree less than $10/3$, contradicting Lemma~\ref{discharging-lemma}. 
\end{proof}

\bibliographystyle{abbrv}
\bibliography{frac}

\end{document}